\let\NAT@parse\undefined
\DeclareMathOperator*{\relu}{ReLU}		
\newcommand{\F}{\mathcal{F}}	
\newcommand{\G}{\mathcal{G}}	
\newcommand{\I}{\mathcal{I}}	
\newcommand{\J}{\mathcal{J}}	
\newcommand{\K}{\mathcal{K}}	
\newcommand{\N}{\mathbb{N}}	
\newcommand{\R}{\mathbb{R}}	
\newcommand{\Rpl}{\mathbb{R}_+}	
\newcommand{\Rbar}{\overline{\mathbb{R}}}	
\newcommand{\Rd}{\R^d}		
\newcommand{\Rn}{\R^n}		
\newcommand{\E}{\mathbb{E}}	
\renewcommand{\P}{\mathcal{P}}	
\newcommand{\D}{\mathcal{D}}	
\DeclareMathOperator*{\argmin}{arg\,min}	
\DeclareMathOperator*{\dom}{dom}		
\DeclareMathOperator*{\relint}{ri}		
\DeclareMathOperator*{\conv}{conv}		
\DeclareMathOperator*{\cone}{cone}		
\DeclareMathOperator*{\epi}{epi}		
\DeclareMathOperator*{\lip}{Lip}		
\newcommand{\prsp}[1]{\mathscr{P}_{#1}}		
\newcommand{\bx}{\mathbf x}			
\newcommand{\by}{\mathbf y}			
\newcommand{\pole}{\pi^{\star}}			
\newcommand{\poll}{\pi_{\theta}}		
\theoremstyle{plain}
\newtheorem{theorem}{Theorem}
\newtheorem{proposition}{Proposition}
\newtheorem{lemma}{Lemma}
\theoremstyle{definition}
\newtheorem{definition}{Definition}
\newtheorem{assumption}{Assumption}
\theoremstyle{remark}
\newtheorem{remark}{Remark}
\title{\LARGE \bf
	Tight Certified Robustness via Min-Max Representations\\
	of ReLU Neural Networks
}
\author{%
	Brendon G.\ Anderson \and Samuel Pfrommer \and Somayeh Sojoudi%
	\thanks{The authors are with the University of California, Berkeley. Emails: {\tt \{\href{mailto:bganderson}{bganderson},\href{mailto:sam.pfrommer@berkeley.edu}{sam.pfrommer},\href{mailto:sojoudi@berkeley.edu}{sojoudi}\}@berkeley.edu}.}%
	\thanks{This work was supported by grants from ONR and NSF.}%
}
\begin{document}

\maketitle
\thispagestyle{empty}
\pagestyle{empty}

\begin{abstract}
	The reliable deployment of neural networks in control systems requires rigorous robustness guarantees. In this paper, we obtain \emph{tight} robustness certificates over convex attack sets for min-max representations of ReLU neural networks by developing a convex reformulation of the nonconvex certification problem. This is done by ``lifting'' the problem to an infinite-dimensional optimization over probability measures, leveraging recent results in distributionally robust optimization to solve for an optimal discrete distribution, and proving that solutions of the original nonconvex problem are generated by the discrete distribution under mild boundedness, nonredundancy, and Slater conditions. As a consequence, optimal (worst-case) attacks against the model may be solved for \emph{exactly}. This contrasts prior state-of-the-art that either requires expensive branch-and-bound schemes or loose relaxation techniques. Experiments on robust control and MNIST image classification examples highlight the benefits of our approach.
\end{abstract}

\section{Introduction}
\label{sec: intro}

Neural networks are rapidly being deployed in control systems as a means to efficiently model nonlinear systems \cite{chen1990non}, controllers \cite{ge2013stable}, and reinforcement learning policies \cite{levine2016end}. However, the performance of neural networks can be extremely sensitive to small fluctuations in their input data \cite{szegedy2014intriguing}. For example, \cite{eykholt2018robust,liu2019perceptual} show that image classification models can be fooled into misclassifying vehicle traffic signs when subject to digital or physical adversarial attacks, i.e., human-imperceptible data perturbations designed to cause failure. This unreliable behavior is directly at odds with the robustness guarantees required in safety-critical control settings such as autonomous driving \cite{bojarski2016end}.

In light of these sensitivities, much effort has been placed on developing methods to rigorously certify the robustness of neural networks, with a large emphasis on models using the popular ReLU activation function. However, certifying a neural network's robustness generally amounts to solving an intractable nonconvex optimization problem \cite{katz2017reluplex}. Three major lines of work have focused on overcoming this intractability: convex relaxations, Lipschitz bounding, and branch-and-bound methods (all discussed further in Section~\ref{sec: related_works}).

In this paper, we utilize an alternative representation of ReLU neural networks as a means to efficiently compute tight robustness certificates using convex optimization (and hence in polynomial time). As a consequence, we are able to exactly compute optimal (worst-case) attacks, which is generally not possible using the popular local search-based attack methods such as projected gradient descent \cite{madry2018towards} and the Carlini-Wagner attack \cite{carlini2017towards}.

\subsection{Related Works}
\label{sec: related_works}

\subsubsection{Robustness Certification}
Certifying the robustness of a model amounts to solving the nonconvex optimization $\inf_{x\in X} g(x)$, where $X$ is a set of possible inputs or attacks (i.e., the ``threat model''), and $g(x)$ is either the model output at an input $x$, or some linear transformation of the model output (e.g., a classifier's margin between two classes).

Convex relaxations work by optimizing over a convex outer-approximation of the set $g(X)$ of possible outputs. Popular relaxations involve linear bounding and programming \cite{wong2018provable,zhang2018efficient}, and semidefinite programming \cite{raghunathan2018semidefinite,fazlyab2020safety}, which constitutes a line of increasingly accurate yet computationally complex relaxations. Convex relaxation-based certificates remain loose in general, and their looseness has been shown to increase with model size \cite{anderson2023towards}.

The Lipschitz constant of a model provides a certified bound on how much the model output may change given some change in its input. Thus, bounds on the Lipschitz constant can yield efficient robustness certificates \cite{fazlyab2019efficient}. A number of works are devoted to computing Lipschitz bounds, but it has proven difficult to obtain tight enough bounds to grant meaningful certificates \cite{fazlyab2019efficient,weng2018towards,virmaux2018lipschitz,jordan2020exactly}.

Mixed-integer programming and branch-and-bound have also been applied to robustness certification for ReLU neural networks \cite{tjeng2019evaluating,anderson2020tightened,wang2021beta}. In contrast to convex relaxations and Lipschitz bounding, these methods are capable of obtaining tight certificates if they are run to convergence, but this incurs exponential computational complexity, preventing them from scaling to practically-sized models \cite{wang2021beta}. Some methods allow for early termination of their optimizations to yield more efficient, yet loose certificates \cite{wang2021beta}.

\subsubsection{Representations of ReLU Neural Networks}
ReLU neural networks are defined by compositions $g = \mathcal{A}_L\circ\sigma\circ\cdots\circ\sigma\circ\mathcal{A}_1$ with affine functions $\mathcal{A}_l$ and elementwise activation functions $\sigma = \relu \colon x \mapsto \max\{0,x\}$. The most prevalent alternative representation of such a model is as a piecewise linear function, i.e., a finite polyhedral partition of $\Rd$ with associated affine functions that agree with $g$ on each polyhedron \cite{montufar2014number,arora2018understanding}. Another representation is as a rational function when working with tropical algebra, where addition $\oplus$ and multiplication $\otimes$ are defined by $x\oplus y = \max\{x,y\}$ and $x\oplus y = x+y$ \cite{zhang2018tropical}. Finally, min-max representations---discussed in Section~\ref{sec: method}---have recently been introduced, where $g$ is expressed as the pointwise minimum of pointwise maxima of affine functions. These works restrict their focus to showcasing the impressive approximation capabilities of ReLU models and their alternative representations.

\subsection{Contributions}

\begin{enumerate}
	\item We show that ReLU neural networks admit min-max representations and hence such representations are universal function approximators.
	\item By lifting the certification to an infinite-dimensional problem over probability measures, we prove that, under mild boundedness, nonredundancy, and Slater conditions, \emph{exact} solutions to the original nonconvex problem are efficiently obtained for min-max representations via reduction to a tractable finite-dimensional convex optimization problem.
	\item Experiments on robust control and MNIST image classification examples demonstrate the effectiveness of our approach.
\end{enumerate}

To the best of our knowledge, our work is the first to grant \emph{tight} robustness certificates in polynomial time amongst those considering general ReLU neural networks and their alternative representations.\footnote{See \cite{awasthi2019robustness} for a polynomial time solution to the special case of $2$-layer ReLU models.}

\subsection{Organization}
In Section~\ref{sec: method}, we introduce and analyze the min-max representation of ReLU neural networks. We develop our tight robustness certificates in Section~\ref{sec: theory}. Experiments illustrating the effectiveness of our approach are given in Section~\ref{sec: exper}, and concluding remarks are made in Section~\ref{sec: conc}.

\subsection{Notations}
The sets of natural, real, nonnegative real, and extended real numbers are denoted by $\N$, $\R$, $\Rpl$, and $\Rbar = \R\cup\{-\infty,\infty\}$ respectively. Throughout, we let $\I,\J,\K\subseteq\N$ denote index sets $\{1,\dots,m\}$, $\{1,\dots,n\}$, and $\{1,\dots,p\}$, respectively. The cardinality, convex hull, conic hull, and relative interior of a subset $X$ of $\Rd$ are denoted by $|X|$, $\conv(X)$, $\cone(X)$, and $\relint(X)$, respectively. Furthermore, we define $\mathcal{B}(X)$ to be the Borel $\sigma$-algebra on $X$. We denote the set of probability measures on the measurable space $(X,\mathcal{B}(X))$ by $\P(X)$. For $x\in\Rd$, the Dirac measure centered at $x$ is denoted by $\delta_x$, which we recall is the probability measure defined by $\delta_x(A) = 0$ if $x \notin A$ and $\delta_x(A) = 1$ if $x\in A$ for all $A\in \mathcal{B}(X)$. The set of all Dirac measures with center in $X$ is defined to be $\D(X) = \{\mu \in \P(X) : \text{$\mu=\delta_x$ for some $x\in X$}\}$. The set of continuous functions from $\Rd$ into $\R$ is denoted by $C(\Rd,\R)$. The effective domain of a function $f\colon \Rd \to \Rbar$ is defined to be the set $\dom(f) = \{x\in\Rd : f(x) < \infty\}$. If $f$ is Borel measurable and $\mu$ is a probability measure on $(X,\mathcal{B}(X))$, then we denote the expected value of $f$ with respect to $\mu$ by $\E_{x\sim\mu} f(x) = \int_X f(x) d\mu(x)$. If $f$ is convex, the subdifferential of $f$ at $x$ is denoted by $\partial f(x)$. Throughout, we let $\|\cdot\|$ denote an arbitrary norm on $\Rd$, and we denote its dual norm by $\|\cdot\|_*$.

\section{Min-Max Affine Functions}
\label{sec: method}

In this section, we formally define min-max affine functions, discuss works related to these functions, and show that every ReLU neural network admits such a representation.

\begin{definition}
	\label{def: min-max_affine_function}
	A function $g\colon\Rd\to\R$ is a \emph{min-max affine function} if there exist $\I,\J_1,\dots,\J_{|\I|}\subseteq\N$ and associated $a_{ij}\in\Rd,~b_{ij}\in\R$ such that $g(x) = \min_{i\in\I}\max_{j\in\J_i}(a_{ij}^\top x + b_{ij})$ for all $x\in\Rd$. In this case, the function $x\mapsto \min_{i\in\I}\max_{j\in\J_i}(a_{ij}^\top x + b_{ij})$ is called the \emph{min-max representation} of $g$.
\end{definition}

The class of all min-max affine functions on $\Rd$ is denoted by $\G$. Notice that $g\in\G$ is the pointwise minimum of $m=|\I|$ convex functions $g_i \colon x\mapsto \max_{j\in\J_i}(a_{ij}^\top x + b_{ij})$, and is therefore nonconvex in general. Without loss of generality, we henceforth assume that, for every $g\in\G$, there exists some $\J\subseteq\N$ with $n=|\J|$ such that the min-max representation of $g$ satisfies $\J_i=\J$ for all $i\in\I$.\footnote{This is without loss of generality, since the value $g(x)$ does not change upon appending affine global underestimators of the convex function $g_i \colon x\mapsto \max_{j\in\J_i}(a_{ij}^\top x + b_{ij})$ to the set of affine components $x\mapsto a_{ij}^\top x + b_{ij}$ of $g$. In other words, $\min_{i\in\I}\max_{j\in\J_i}(a_{ij}^\top x + b_{ij}) = \min_{i\in\I}\max_{j\in\J}(a_{ij}^\top x + b_{ij})$ if one defines $\J = \{1,\dots,n\}$ with $n = \max_{i\in\I}|\J_i|$ and $a_{ij}=v_i,~b_{ij}=g_i(0)$ for $j\in\J\setminus\J_i$, for all $i\in\I$, where $v_i\in\Rd$ is a subgradient of $g_i$ at $0$ (which exists by \cite[Theorem~23.4]{rockafellar1970convex}).}

\textbf{Related Works on Min-Max Affine Functions.} In the mathematics literature, min-max affine functions are also termed lattice polynomials \cite{marichal2009weighted}. The work \cite{velasco2022morphoactivation} shows that piecewise linear activation functions can be written in min-max affine form, and that neural networks learned with such representations perform highly in image classification tasks. The works \cite{bagirov2005max,bagirov2005supervised} study the theoretical and algorithmic aspects of training min-max affine functions to separate data, and show that separating $\{x_1,\dots,x_p\}\subseteq\Rd$ from $\{y_1,\dots,y_q\}\subseteq\Rd$ requires no more that $pq$ affine components. The authors of \cite{rister2017piecewise} use min-max representations of neural networks to characterize the training optimization landscape. The conversion of ReLU neural networks into min-max affine form is characterized in \cite[Theorem~4.15]{chen2020learning}. An algorithm for nonlinear system identification using min-max affine functions is developed in \cite{wang2002nonlinear}. Finally, min-max affine functions have been used as consistent statistical estimators, termed ``Riesz estimators,'' in the mathematical economics literature \cite{aliprantis2007riesz}. To the best of our knowledge, our work is the first to exploit min-max representations for purposes of robustness certification.

We now proceed with analyzing the representation power min-max affine functions. Let $\F$ be the class of all ReLU neural networks on $\Rd$. The following theorem shows that every ReLU neural network can be represented as a min-max affine function, and therefore min-max affine functions are universal function approximators.

\begin{theorem}
	\label{thm: uat}
	For every $f\in\F$, there exist $\I,\J\subseteq\N$ and $(a_{ij},b_{ij})\in\Rd\times\R$ for $i\in\I,~j\in\J$ such that
	\begin{equation}
		f(x) = \min_{i\in\I}\max_{j\in\J} (a_{ij}^\top x + b_{ij}) ~ \text{for all $x\in\Rd$}.
		\label{eq: uat}
	\end{equation}
	Hence, the class $\G$ of min-max affine functions is dense in $C(\Rd,\R)$ with respect to the topology of uniform convergence on compact sets.
\end{theorem}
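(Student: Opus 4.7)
The plan is to exhibit a min-max representation for each ReLU network $f$ by first noting that $f$ is continuous piecewise linear (CPWL) with finitely many affine pieces, and then invoking the classical CPWL-to-min-max conversion. The density claim will then reduce to the standard universal approximation theorem for ReLU networks.

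First, I would show by induction on the depth $L$ that any $f = \mathcal{A}_L \circ \sigma \circ \cdots \circ \sigma \circ \mathcal{A}_1 \in \F$ is CPWL with finitely many affine pieces. The base case $L=1$ is trivial since an affine map is CPWL with a single piece. For the inductive step, the CPWL class is closed under composition with affine maps (pieces transform affinely) and under the elementwise $\relu$, which refines the existing polyhedral partition only along the finitely many hyperplanes given by the zero loci of the preactivations at the current layer. A finite common refinement of finitely many polyhedral partitions is again a polyhedral partition, so the resulting function remains CPWL with finitely many pieces.

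Next, I would apply the classical structural representation for CPWL functions: given the affine pieces $\ell_1,\dots,\ell_N$ of $f$ and the polyhedra $P_1,\dots,P_m$ of its linearity partition, there exist subsets $\J_1,\dots,\J_m \subseteq \{1,\dots,N\}$ such that each convex function $g_i = \max_{j\in\J_i}\ell_j$ agrees with $f$ on $P_i$ and upper-bounds $f$ globally, yielding $f = \min_{i\in\I} g_i$. The hard part will be verifying this representation: the index sets must be chosen so that every point of $\Rd$ simultaneously sits above $f$ in each $g_i$ while one of the $g_i$ achieves equality. The cleanest route is to cite this conversion directly (e.g., via \cite{chen2020learning}, which characterizes precisely the ReLU-to-min-max reduction) rather than redoing the partition bookkeeping. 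Once the representation is obtained, the footnote following Definition~\ref{def: min-max_affine_function} lets us pad each $\J_i$ with duplicated subgradient-based underestimators of $g_i$ at the origin, normalizing to a common $\J$ and producing the desired form~\eqref{eq: uat}.

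Finally, for the density claim, since $\F \subseteq \G$ by what has just been established, and $\F$ is itself dense in $C(\Rd,\R)$ with respect to uniform convergence on compact sets by the universal approximation theorem for ReLU networks, the density of $\G$ in $C(\Rd,\R)$ follows immediately by transitivity of containment within the dense subset.
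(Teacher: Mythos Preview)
Your proposal is correct and follows essentially the same approach as the paper: establish that every $f\in\F$ is continuous piecewise affine, invoke a known CPWL-to-min-max conversion theorem, and conclude density from $\F\subseteq\G$ together with the universal approximation theorem for ReLU networks. The only cosmetic differences are that the paper cites Ovchinnikov's theorem (rather than \cite{chen2020learning}) for the conversion step and omits the explicit induction on depth, simply asserting piecewise affineness of ReLU networks as known.
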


\begin{proof}
	Every $f\in\F$ is piecewise affine, i.e., there is a finite collection $\mathcal{Q}$ of closed subsets of $\Rd$ such that $\Rd = \bigcup_{Q\in\mathcal{Q}}Q$ and $f$ is affine on every $Q\in\mathcal{Q}$. Hence, by \cite[Theorem~4.1]{ovchinnikov2002max}, there exist $\I,\J\subseteq\N$ and $(a_{ij},b_{ij})\in\Rd\times\R$ for $i\in\I,~j\in\J$ such that \eqref{eq: uat} holds. Thus, since $\F\subseteq\G$ and $\F$ is dense in $C(\Rd,\R)$ with respect to the topology of uniform convergence on compact sets \cite[Theorem~3.1]{pinkus1999approximation}, it holds that $\G$ is dense in $C(\Rd,\R)$ in the same sense.
\end{proof}

\section{Theoretical Robustness Certificates}
\label{sec: theory}

In this section, we develop our theoretical robustness certificates. Consider a model $g\colon \Rd \to \Rbar$, which may, for example, represent the output of a scalar-valued controller or the confidence of a binary classifier $f\colon\Rd\to\{1,2\}$ defined by $f(x) = 1$ if $g(x) \ge 0$ and $f(x)=2$ if $g(x)<0$. We consider the asymmetric robustness setting introduced in \cite{pfrommer2023asymmetric}, where nonnegative outputs $g(x) \ge 0$ are ``sensitive'' and we seek to certify that no input within some convex uncertainty set $X\subseteq\Rd$ causes the output to leave the sensitive operating regime. This asymmetric setting accurately models realistic adversarial situations. For example, an adversary may seek some imperceptible attack $x\in X = \{x'\in\Rd : \|x'- \overline{x}\| \le \epsilon\}$ to cause a vehicle's image classifier to predict ``no pedestrian'' ($g(x)<0$) when the nominal image $\overline{x}$ has a pedestrian in view (the sensitive regime; $g(x) \ge 0$), but not the other way around. We leave as future work the extension to vector-valued models.

Formally, the certification problem we seek to solve in this work is written
\begin{equation*}
	p^\star \coloneqq \inf_{x\in X} g(x).
\end{equation*}
The model $g$ is robust if and only if $p^\star \ge 0$. On the other hand, if $x^\star$ solves $p^\star$, then $x^\star$ is an optimal (worst-case) attack in $X$, and it is successful if $p^\star < 0$.

The problem $p^\star$ is nonconvex due to the nonconvexity of $g$. When $g$ is a min-max affine function, a naive reformulation of $p^\star$ yields that
\begin{equation*}
	p^\star = \inf_{(x,i,t)\in X\times \I\times\R} \{t : \text{$a_{ij}^\top x + b_{ij} \le t$ for all $j\in\J$}\},
\end{equation*}
which removes the nonconvexity in $x$ but is inefficient to solve in general due to the integer variable $i$. Alternatively, one may attempt to directly reformulate the problem into a convex one by minimizing the convex envelope of $g$. Although the resulting problem coincides with our convex reformulation $\underline{c}$ (introduced in Section~\ref{sec: lifting_the_problem}) on the relative interior of the direct reformulation's feasible set, it is difficult to obtain regularity conditions under which the direct reformulation holds with respect to its entire feasible set.

We propose an alternative approach to solving $p^\star$ that consists of three steps: 1) lift the problem to an optimization over probability measures, 2) leverage results and regularity conditions in distributionally robust optimization to make a finite-dimensional reduction of the problem, and 3) reformulate and solve the finite-dimensional reduction.

\subsection{Lifting the Problem}
\label{sec: lifting_the_problem}
We lift the problem to an optimization over probability measures by noting that $g(x) = \int_X g(x') d\delta_x(x') = \E_{x'\sim \delta_x} g(x')$ whenever $x\in X$:
\begin{equation*}
	p^\star = \inf_{\delta_x \in \D(X)} \E_{x'\sim\delta_x} g(x').
\end{equation*}
With this reformulation, the optimization objective is linear in the variable $\delta_x$, but the feasible set $\D(X)$ is nonconvex, making the problem intractable as written. Therefore, we consider relaxing the problem to an optimization over all probability measures:
\begin{equation*}
	p' \coloneqq \inf_{\mu\in\P(X)} \E_{x'\sim \mu} g(x').
\end{equation*}
The problem $p'$ is convex, but infinite-dimensional. We start by showing that the relaxation is exact:

\begin{proposition}
	\label{prop: exact_relaxation}
	It holds that $p' = p^\star$.
\end{proposition}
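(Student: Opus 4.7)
The plan is to establish $p' = p^\star$ by proving two matching inequalities, leveraging the lifted reformulation $p^\star = \inf_{\delta_x \in \D(X)} \E_{x'\sim\delta_x} g(x')$ already derived in the text.

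For the direction $p' \le p^\star$, I would observe that $\D(X) \subseteq \P(X)$, since every Dirac measure centered at a point of $X$ is a probability measure on $X$. Hence the infimum defining $p'$ is taken over a superset of the feasible set defining the lifted form of $p^\star$, which can only decrease (or leave unchanged) the value. This immediately gives $p' \le p^\star$.

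For the reverse direction $p^\star \le p'$, I would use the elementary bound $g(x') \ge \inf_{x\in X} g(x) = p^\star$ for every $x' \in X$, so for any $\mu \in \P(X)$, integrating this pointwise lower bound against $\mu$ yields $\E_{x'\sim\mu} g(x') \ge p^\star \cdot \mu(X) = p^\star$. Taking the infimum over $\mu \in \P(X)$ gives $p' \ge p^\star$. Combining the two inequalities establishes the claim.

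The only technical subtlety, which I expect to be minor rather than a real obstacle, is confirming that the expectations $\E_{x'\sim\mu} g(x')$ are well-defined in $\Rbar$. Borel measurability of $g$ follows from the fact that $g$, as a finite min-max of affine functions, is continuous on $\Rd$; the integrability issue is resolved by noting that $g$ is bounded below on $X$ whenever $p^\star > -\infty$ (and if $p^\star = -\infty$, both inequalities become trivial upon interpreting the integral as taking value $-\infty$ as well). No further regularity on $X$ or $g$ beyond what has been assumed in defining the problem is required, which makes this a clean and essentially one-line argument in each direction.
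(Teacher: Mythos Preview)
Your proof is correct and follows essentially the same approach as the paper: the inclusion $\D(X)\subseteq\P(X)$ for one inequality, and integrating the pointwise bound $g(x')\ge p^\star$ against an arbitrary $\mu\in\P(X)$ for the other. The additional remarks on measurability and the case $p^\star=-\infty$ are fine but not needed for the paper's argument.
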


\begin{proof}
	Since $\D(X)\subseteq\P(X)$, it holds that $p' \le p^\star$. Now, let $\mu\in\P(X)$. Then, since $p^\star \le g(x')$ for all $x'\in X$, it holds that
	\begin{equation*}
		p^\star = \int_X p^\star d\mu(x') \le \int_X g(x') d\mu(x') = \E_{x'\sim \mu} g(x').
	\end{equation*}
	Since $\mu\in\P(X)$ is arbitrary, we conclude that $p^\star \le \inf_{\mu\in\P(X)}\E_{x'\sim \mu} g(x') = p'$. Hence, $p' = p^\star$.
\end{proof}

Next, we show that solutions of the nonconvex problem $p^\star$ are generated by discrete solutions of the relaxation $p'$.

\begin{proposition}
	\label{prop: recover_dirac_solution}
	If $\mu^\star = \sum_{i\in\I} \lambda_i \delta_{x_i}$ is a discrete probability measure that solves $p'$, then $x^\star \coloneqq x_i$ solves $p^\star$ for all $i\in\I$ such that $\lambda_i > 0$.
\end{proposition}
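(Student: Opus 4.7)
The plan is to directly exploit Proposition~\ref{prop: exact_relaxation} together with the structure of a discrete measure. First, since $\mu^\star$ solves $p'$ and $p' = p^\star$, we have
\begin{equation*}
    \E_{x' \sim \mu^\star} g(x') = p'= p^\star.
\end{equation*}
Expanding the expectation against the discrete measure $\mu^\star = \sum_{i\in\I}\lambda_i \delta_{x_i}$ yields $\sum_{i\in\I}\lambda_i g(x_i) = p^\star$.

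Next, I would observe that the support points $x_i$ of $\mu^\star$ (at least those with $\lambda_i > 0$) must lie in $X$, since $\mu^\star \in \P(X)$ is a probability measure on $X$. Consequently, by the very definition of $p^\star$ as the infimum of $g$ over $X$, we have $g(x_i) \ge p^\star$ for every $i\in\I$ with $\lambda_i > 0$. Combining this with $\sum_{i\in\I}\lambda_i = 1$ and $\lambda_i \ge 0$, the identity $\sum_{i\in\I}\lambda_i g(x_i) = p^\star$ becomes a convex combination of numbers that are each at least $p^\star$ yet averages to $p^\star$. Hence each active term must satisfy $g(x_i) = p^\star$ exactly, so $x_i$ attains the infimum and therefore solves $p^\star$.

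I do not expect any real obstacle here: the argument is essentially the standard ``equality in a convex combination forces pointwise equality'' trick, and the only subtlety is the observation that atoms of $\mu^\star$ lie in $X$, which is immediate from $\mu^\star\in\P(X)$. No measurability or integrability issues arise because the measure is finitely supported.
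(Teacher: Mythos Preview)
Your proposal is correct and follows essentially the same approach as the paper: both use Proposition~\ref{prop: exact_relaxation} to equate $p'$ with $p^\star$, expand the expectation as a finite convex combination, and conclude via the ``convex combination of values at least $p^\star$ equals $p^\star$ forces each active term to equal $p^\star$'' argument. Your presentation is in fact slightly more direct than the paper's (which first singles out an $i^\star\in\argmin_i g(x_i)$ and then argues by contradiction), and you are explicit about why the atoms with $\lambda_i>0$ lie in $X$, a point the paper leaves implicit.
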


\begin{proof}
	Let $i^\star \in \argmin_{i\in\I} g(x_i)$. Since $\lambda_i \ge 0$ for all $i$, $\sum_{i\in\I} \lambda_i = 1$, and $g(x_{i^\star}) \le g(x_i)$ for all $i$, it holds that
	\begin{align*}
		p^\star &\le  g(x_{i^\star}) = \sum_{i\in\I} \lambda_i g(x_{i^\star}) \\
		&\le \sum_{i\in\I} \lambda_i g(x_i) = \E_{x'\sim \mu^\star} g(x') = p',
	\end{align*}
	so $x_{i^\star}$ solves $p^\star$ by Proposition~\ref{prop: exact_relaxation}. If $x_{i'}$ does not solve $p^\star$ for some $i'\in\I$ such that $\lambda_{i'} > 0$, then $p^\star = g(x_{i^\star}) < g(x_{i'})$, implying that $\sum_{i\in\I} \lambda_i g(x_{i^\star}) < \sum_{i\in\I} \lambda_i g(x_i)$ and hence that $p^\star < p'$, which contradicts Proposition~\ref{prop: exact_relaxation}.
\end{proof}

The above results show that we may solve the problem $p^\star$ of interest by solving $p'$ for a discrete optimal distribution. The remainder of this section is dedicated to this approach.

\subsection{Finite-Dimensional Reduction}
To make our finite-dimensional reduction, we recall the definitions of conjugate and perspective functions.

\begin{definition}
	\label{def: conjugate}
	The \emph{conjugate} of a function $f\colon \Rd \to \Rbar$ is the function $f^*\colon \Rd\to \Rbar$ defined by
	\begin{equation*}
		f^*(y) = \sup_{x\in\dom(f)} (y^\top x - f(x)).
	\end{equation*}
\end{definition}

We write $f^{**}$ to denote the biconjugate $(f^*)^*$.

\begin{definition}
	\label{def: perspective}
	The \emph{perspective} of a proper, closed, and convex function $f\colon \Rd \to \Rbar$ is the function $\prsp{f}\colon \Rd\times \Rpl \to \Rbar$ defined by
	\begin{equation*}
		\prsp{f}(x,t) = \begin{aligned}
			\begin{cases}
				t f(x / t)  & \text{if $t>0$}, \\
				\sup_{y\in \dom(f^*)} y^\top x & \text{if $t=0$}.
			\end{cases}
		\end{aligned}
	\end{equation*}
\end{definition}

Recall that the perspective $\prsp{f}$ of a convex function $f$ is also convex, and that the conjugate $f^*$ is convex even when $f$ is nonconvex \cite{boyd2004convex}.

Throughout the remainder of the paper, we fix $g$ and $X$ to be min-max affine and convex, respectively, via the following structural assumptions:

\begin{assumption}
	\label{ass: min-max_affine}
	It holds that $g\in\G$, taking the form $g(x) = \min_{i\in\I} g_i(x)$ with $g_i(x) = \max_{j\in\J}(a_{ij}^\top x + b_{ij})$.
\end{assumption}

\begin{assumption}
	\label{ass: support_set}
	The set $X$ takes the form $X = \{x\in\Rd : c_k(x) \le 0, ~ k\in\K\}$ with $c_k\colon\Rd\to\Rbar$ a proper, closed, and convex function for all $k\in\K$.
\end{assumption}

We now make the reduction by introducing two finite-dimensional convex optimization problems:

\begin{equation*}
	\begin{aligned}
		\underline{c} \coloneqq{} & \underset{\substack{\lambda_i,\eta_i\in\R \\ x_i \in\Rd}}{\text{minimize}} && \sum_{i\in\I} \eta_i \\
		& \text{subject to} && \prsp{c_k}(x_i,\lambda_i) \le 0, ~ i\in\I, ~ k\in\K, \\
		&&& \prsp{g_i}(x_i,\lambda_i) \le \eta_i, ~ i\in\I, \\
		&&& \sum_{i\in\I}\lambda_i = 1, ~ \lambda \ge 0.
	\end{aligned}
\end{equation*}
\begin{equation*}
	\begin{aligned}
		\overline{c} \coloneqq{} & \underset{\substack{\alpha,\beta_{ik}\in\R \\ y_i,z_{ik}\in\Rd}}{\text{maximize}} && -\alpha \\
		& \text{subject to} && g_i^*(y_i) + \sum_{k\in\K} \prsp{c_k^*}(z_{ik},\beta_{ik}) \le \alpha, ~ i\in \I, \\
		&&& y_i + \sum_{k\in\K} z_{ik} = 0, ~ i\in \I, \\
		&&& \beta_{ik} \ge 0, ~ i\in\I, ~ k\in\K.
	\end{aligned}
\end{equation*}

Intuitively, $\underline{c}$ is minimizing a sort of ``average'' of the components $g_i$ at a finite number of points $x_i$ with weights given by the probability vector $\lambda$, and $\overline{c}$ is its dual. We now leverage recent results in distributionally robust optimization to show that the finite reductions $\underline{c},\overline{c}$ allow us to solve the infinite-dimensional problem $p'$ under mild assumptions.

\begin{definition}
	\label{def: slater}
	Let $f_0,f_1,\dots,f_m$ and $h_1,\dots,h_n$ be extended real-valued functions defined on $\Rd$. The optimization problem $p = \inf\{f_0(x) : f_1(x)\le 0, \dots, f_m(x)\le 0, ~ h_1(x) = 0, \dots, h_n(x)=0, ~ x\in\Rd\}$ \emph{admits a Slater point} if there exists $x\in\bigcap_{i=0}^m \relint(\dom(f_i)) \cap \bigcap_{j=1}^n \relint(\dom(h_j))$ such that $f_i(x) \le 0$ and $h_j(x) = 0$ for all $i$ and all $j$, and such that $f_i(x) < 0$ for all $i\ne 0$ such that $f_i$ is nonlinear.
\end{definition}

\begin{assumption}
	\label{ass: slater}
	The set $X$ is bounded and the optimization problem $\overline{c}$ admits a Slater point.
\end{assumption}

The above boundedness assumption on $X$ is standard in the adversarial robustness literature. The Slater condition may be verified by simply solving $\overline{c}$ with a small number $\epsilon>0$ added to all of the nonlinear inequality constraints; replace $f_i(x) \le 0$ with $f_i(x)+\epsilon \le 0$ for all nonlinear constraint functions $f_i$.

\begin{theorem}
	\label{thm: certificate}
	If Assumption~\ref{ass: slater} holds, then $\underline{c} = p' = \overline{c}$, and the discrete probability distribution $\sum_{i\in\I : \lambda_i^\star \ne 0} \lambda_i^\star \delta_{x_i^\star / \lambda_i^\star}$ solves $p'$ for all solutions $(\eta^\star,\lambda^\star,x^\star)$ to $\underline{c}$.
\end{theorem}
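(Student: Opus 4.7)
My plan is to prove the chain $\underline{c} = p' = \overline{c}$ in three stages: (i) lift $p'$ to an optimization over decomposed positive measures on $X$, one per component index $i \in \I$; (ii) reduce to single-atom measures per index using Jensen's inequality; and (iii) apply a change of variables into perspective form together with convex duality. Once (i)--(iii) are established, the optimal measure $\mu^\star = \sum_{i : \lambda_i^\star > 0} \lambda_i^\star \delta_{x_i^\star/\lambda_i^\star}$ is recovered directly from the construction that identifies solutions of $\underline{c}$ with solutions of $p'$.

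For step (i), I would show that $p'$ equals the infimum of $\sum_{i \in \I} \int_X g_i \, d\mu_i$ taken over all families $(\mu_i)_{i \in \I}$ of positive Borel measures on $X$ with $\sum_{i \in \I} \mu_i(X) = 1$. The ``$\le$'' direction is immediate, since any such decomposition yields $\mu = \sum_i \mu_i \in \P(X)$ with $\int g \, d\mu = \int \min_i g_i \, d\mu \le \sum_i \int g_i \, d\mu_i$. The ``$\ge$'' direction uses the measurable partition $\{X_i\}_{i \in \I}$ of $X$ selecting (say, lexicographically) an index $i$ achieving $\min_i g_i(x)$, and sets $\mu_i = \mu|_{X_i}$ for any $\mu \in \P(X)$. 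For step (ii), fix $\lambda_i := \mu_i(X)$ and apply Jensen's inequality to the probability measure $\mu_i/\lambda_i$ (when $\lambda_i > 0$), using convexity of $g_i$ and of $X$, to conclude that $\int g_i \, d\mu_i \ge \lambda_i g_i(\bar{x}_i)$ for some $\bar{x}_i \in X$. Hence the infimum is unchanged if each $\mu_i$ is restricted to a single-atom form $\lambda_i \delta_{x_i}$ with $x_i \in X$, producing the problem $\inf \sum_i \lambda_i g_i(x_i)$ subject to $x_i \in X$, $\lambda \ge 0$, $\sum_i \lambda_i = 1$.

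For step (iii), the change of variables $\tilde{x}_i = \lambda_i x_i$ converts $\lambda_i g_i(x_i)$ into $\prsp{g_i}(\tilde{x}_i, \lambda_i)$ and $c_k(x_i) \le 0$ into $\prsp{c_k}(\tilde{x}_i, \lambda_i) \le 0$, provided $\lambda_i > 0$; introducing epigraphical slacks $\eta_i \ge \prsp{g_i}(\tilde{x}_i, \lambda_i)$ yields exactly the convex program $\underline{c}$. The boundedness of $X$ from Assumption~\ref{ass: slater}, together with the support-function characterization of $\prsp{c_k}(\cdot, 0)$ in Definition~\ref{def: perspective}, ensures that the $\lambda_i = 0$ boundary cases in $\underline{c}$ agree with the zero-mass contributions from the measure decomposition, so that $\underline{c} = p'$. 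Finally, $\overline{c}$ is obtained as the standard Fenchel--Lagrangian dual of $\underline{c}$ by dualizing the two sets of perspective inequality constraints and using the identity $\prsp{c_k}^* = \prsp{c_k^*}$ up to a sign-flip in the second argument; weak duality yields $\underline{c} \ge \overline{c}$, and strong duality under the Slater hypothesis of Assumption~\ref{ass: slater} promotes this to $\underline{c} = \overline{c}$.

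The main obstacle will be the careful treatment of the $\lambda_i = 0$ boundary in step (iii): the perspective-function formulation enlarges the feasible set beyond what the unscaled change of variables naturally yields, so one must verify, using the boundedness of $X$, that this enlargement neither decreases the infimum below $p'$ nor invalidates the recovery formula for $\mu^\star$ (which is why the statement restricts the summation to indices with $\lambda_i^\star > 0$). Everything else reduces to classical measure-theoretic manipulations and convex-analytic duality, with the Slater assumption doing the work of upgrading weak to strong duality between $\underline{c}$ and $\overline{c}$.
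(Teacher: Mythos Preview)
Your argument is correct, but it takes a genuinely different route from the paper. The paper's proof is a single sentence: it checks that $X$ is a finite intersection of $0$-sublevel sets of proper, closed, convex functions and that each $g_i$ is proper, closed, and convex, and then invokes \cite[Theorem~12(ii)]{zhen2021mathematical} as a black box to obtain both $\underline{c}=p'=\overline{c}$ and the discrete-measure recovery. Your three-step program---measurable per-index decomposition of $\mu$ into positive measures $\mu_i$, Jensen reduction of each $\mu_i$ to a single atom using convexity of $g_i$ and of $X$, and then the perspective change of variables followed by Fenchel--Lagrangian duality---is in effect a self-contained reconstruction of the machinery underlying that cited DRO theorem. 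What your approach buys is transparency: it exposes precisely where boundedness of $X$ is used (the recession-cone constraint $\prsp{c_k}(x_i,0)\le 0$ forces $x_i=0$ when $\lambda_i=0$, so $\eta_i\ge 0$ and the $\lambda_i=0$ indices cannot lower the objective) and why Slater on $\overline{c}$, rather than on $\underline{c}$, is the right hypothesis (it closes the gap and delivers attainment on the $\underline{c}$ side, which is what the recovery statement needs). The paper's approach buys brevity and places the result within an established DRO framework.
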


\begin{proof}
	Since $X$ is defined by a finite intersection of $0$-sublevel sets of proper, closed, and convex functions (Assumption~\ref{ass: support_set}), and since every $g_i\colon x\mapsto \max_{j\in\J} (a_{ij}^\top x + b_{ij})$ is a proper, closed, and convex function, the result follows from \cite[Theorem~12(ii)]{zhen2021mathematical}.
\end{proof}

Theorem~\ref{thm: certificate} together with our Propositions~\ref{prop: exact_relaxation} and \ref{prop: recover_dirac_solution} show that we are able to \emph{exactly} compute an optimal attack solving the nonconvex problem $p^\star$ by solving the convex optimizations $\underline{c},\overline{c}$.

\subsection{Reformulating and Solving the Finite Reduction}

In order to solve $\underline{c},\overline{c}$, we must derive the appropriate conjugates and perspectives. In this subsection, we do so for the common cases where $X$ is defined in terms of norm balls or polyhedra. We will also see that computing the conjugate $g_i^*$ is highly nontrivial, and as a result we turn to tractably reformulating the constraint involving $g_i^*$ using duality theory.

\begin{proposition}
	\label{prop: max_affine_perspective}
	The perspective of $g_i\colon x\mapsto \max_{j\in\J} (a_{ij}^\top x + b_{ij})$ is given by $\prsp{g_i}(x,t) = \max_{j\in\J} (a_{ij}^\top x + b_{ij} t)$ for all $(x,t)\in\Rd\times\Rpl$.
\end{proposition}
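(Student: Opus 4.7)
The plan is to verify the claimed formula for $\prsp{g_i}(x,t)$ in the two cases of Definition~\ref{def: perspective} separately, namely $t>0$ and $t=0$. The $t>0$ case is immediate: I would just substitute into $\prsp{g_i}(x,t)=tg_i(x/t)$ and pull the factor of $t$ inside the maximum, using the positivity of $t$ to preserve the ordering, to obtain $\max_{j\in\J}(a_{ij}^\top x + b_{ij}t)$. This case requires no real work.

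The genuine content is the $t=0$ case, where I need to show that $\sup_{y\in\dom(g_i^*)} y^\top x = \max_{j\in\J} a_{ij}^\top x$. The key subclaim is $\dom(g_i^*) = \conv\{a_{ij} : j \in \J\}$. I would establish this by computing $g_i^*$ explicitly: writing $g_i(x) = \max_{\lambda\in\Delta}\sum_{j\in\J}\lambda_j(a_{ij}^\top x + b_{ij})$ over the simplex $\Delta$, then expressing the conjugate as
\begin{equation*}
g_i^*(y) = \sup_{x\in\Rd}\min_{\lambda\in\Delta}\Bigl((y-\textstyle\sum_j \lambda_j a_{ij})^\top x - \sum_j \lambda_j b_{ij}\Bigr),
\end{equation*}
and swapping the sup and min via Sion's minimax theorem (the function is linear in $x$ and linear in $\lambda$, and $\Delta$ is compact convex). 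The inner supremum over $x$ is $-\sum_j\lambda_j b_{ij}$ if $y = \sum_j \lambda_j a_{ij}$ and $+\infty$ otherwise, so $g_i^*(y)$ is finite exactly when $y$ lies in $\conv\{a_{ij}:j\in\J\}$.

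Given this characterization, the linear functional $y\mapsto y^\top x$ attains its supremum over the polytope $\conv\{a_{ij}:j\in\J\}$ at one of the extreme points $a_{ij^\star}$, hence
\begin{equation*}
\sup_{y\in\dom(g_i^*)} y^\top x = \max_{j\in\J} a_{ij}^\top x = \max_{j\in\J}(a_{ij}^\top x + b_{ij}\cdot 0),
\end{equation*}
matching the proposed formula evaluated at $t=0$.

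The main obstacle is the Sion minimax swap used to compute $\dom(g_i^*)$; everything else is routine substitution and the elementary fact that linear functionals over convex hulls are maximized at vertices. An alternative that avoids minimax entirely would be to invoke a standard formula for the conjugate of a pointwise maximum of affine functions as an infimal convolution or a linear program (e.g., from Rockafellar), but either route produces the same domain and thus the same value at $t=0$.
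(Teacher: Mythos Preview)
Your proof is correct but takes a different route from the paper for the $t=0$ case. The paper avoids computing $\dom(g_i^*)$ entirely: since $g_i$ is proper, closed, and convex, its perspective is the closure of $(x,t)\mapsto tg_i(x/t)$ on $\Rd\times(0,\infty)$ (via Rockafellar, Theorem~13.3 and Corollary~8.5.2), so $\prsp{g_i}(x,0)=\liminf_{(x',t')\to(x,0)}\max_{j\in\J}(a_{ij}^\top x'+b_{ij}t')$, and continuity of the right-hand side in $(x',t')$ gives $\max_{j\in\J}a_{ij}^\top x$ immediately. Your approach instead works directly from the $t=0$ branch of Definition~\ref{def: perspective}, establishing $\dom(g_i^*)=\conv\{a_{ij}:j\in\J\}$ via a Sion minimax swap and then maximizing the linear functional over the polytope's vertices. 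The paper's argument is shorter for this proposition, but yours has the side benefit of proving the domain characterization---which the paper needs anyway (it appears as Lemma~\ref{lem: domain_max_affine_conjugate}, proved there by subdifferential calculus rather than minimax). So your route does more work up front but consolidates two results into one computation.
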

\begin{proof}
	Let $x\in\Rd$. If $t>0$, then
	\begin{multline*}
		\prsp{g_i}(x,t) = t g_i(x / t) \\
		= t\max_{j\in\J} (a_{ij}^\top x / t + b_{ij}) = \max_{j\in\J} (a_{ij}^\top x + b_{ij}t).
	\end{multline*}
	If $t=0$, then
	\begin{align*}
		\prsp{g_i}(x,t) &= \liminf_{(x',t')\to(x,0)} \prsp{g_i}(x',t') \\
				&= \liminf_{(x',t')\to(x,0)} \max_{j\in\J} (a_{ij}^\top x' + b_{ij}t') \\
				&= \max_{j\in\J}a_{ij}^\top x = \max_{j\in\J} (a_{ij}^\top x + b_{ij} t),
	\end{align*}
	where the first equality comes from Theorem~13.3 and Corollary~8.5.2 in \cite{rockafellar1970convex} and the third equality comes from the continuity of $(x,t)\mapsto \max_{j\in\J} (a_{ij}^\top x + b_{ij}t)$.
\end{proof}

\begin{proposition}
	\label{prop: norm_perspective}
	The perspective of $c_k\colon x\mapsto \|x-\overline{x}\| - \epsilon$ is given by $\prsp{c_k}(x,t) = \|x-t\overline{x}\| - \epsilon t$ for all $(x,t)\in\Rd\times\Rpl$.
\end{proposition}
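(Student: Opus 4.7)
The plan is to split into cases based on whether $t>0$ or $t=0$, mirroring the structure of the perspective definition, and to check that both cases yield the stated closed form $\|x-t\overline{x}\|-\epsilon t$.

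For the case $t>0$, I would simply invoke the definition $\prsp{c_k}(x,t) = t\, c_k(x/t)$, substitute $c_k(x/t) = \|x/t - \overline{x}\| - \epsilon$, and use positive homogeneity of the norm to pull $t$ inside: $t\|x/t - \overline{x}\| = \|x - t\overline{x}\|$. This gives $\prsp{c_k}(x,t) = \|x - t\overline{x}\| - \epsilon t$ immediately.

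The main content lies in the case $t=0$, where the definition uses $\prsp{c_k}(x,0) = \sup_{y \in \dom(c_k^*)} y^\top x$, and this must be shown to equal $\|x\|$ (which coincides with $\|x - 0\cdot \overline{x}\| - \epsilon \cdot 0$). To do this, I would first compute the conjugate:
\begin{equation*}
    c_k^*(y) = \sup_{x\in\Rd}\bigl(y^\top x - \|x-\overline{x}\| + \epsilon\bigr).
\end{equation*}
The substitution $z = x - \overline{x}$ decouples this into $y^\top \overline{x} + \epsilon + \sup_{z}(y^\top z - \|z\|)$. The supremum $\sup_{z}(y^\top z - \|z\|)$ is the standard support-function calculation: it equals $0$ when $\|y\|_* \le 1$ and $+\infty$ otherwise. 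Hence $\dom(c_k^*) = \{y : \|y\|_* \le 1\}$, and therefore
\begin{equation*}
    \prsp{c_k}(x,0) = \sup_{\|y\|_*\le 1} y^\top x = \|x\|,
\end{equation*}
by the definition of the dual norm. Since the target expression $\|x - t\overline{x}\| - \epsilon t$ evaluates to $\|x\|$ at $t=0$, the two cases together establish the claim.

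There is no real obstacle here — the computation is routine once one recognizes that $c_k$ is the sum of a translated norm and a constant, so the only mildly delicate point is correctly identifying $\dom(c_k^*)$ as the unit dual-norm ball (closed because the sup in the definition of the conjugate is attained on its closure). One could alternatively mimic the $\liminf$ argument from the proof of Proposition~\ref{prop: max_affine_perspective} by invoking Rockafellar's Theorem~13.3 and Corollary~8.5.2 together with the continuity of $(x,t)\mapsto \|x - t\overline{x}\| - \epsilon t$, but the direct support-function route is cleaner and more self-contained.
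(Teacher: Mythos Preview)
Your proof is correct. The $t>0$ case is identical to the paper's. For $t=0$, however, you and the paper take different routes: the paper does \emph{not} invoke Definition~\ref{def: perspective} directly but instead mimics Proposition~\ref{prop: max_affine_perspective} and uses the closure characterization of the perspective (Rockafellar, Theorem~13.3 and Corollary~8.5.2) to write $\prsp{c_k}(x,0)=\liminf_{(x',t')\to(x,0)}(\|x'-t'\overline{x}\|-\epsilon t')=\|x\|$, relying only on continuity of the candidate formula. You instead apply the $t=0$ branch of Definition~\ref{def: perspective} verbatim, compute $\dom(c_k^*)=\{y:\|y\|_*\le 1\}$ (essentially anticipating the paper's Proposition~\ref{prop: norm_conjugate}), and evaluate the support function $\sup_{\|y\|_*\le 1}y^\top x=\|x\|$. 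Your route is more self-contained---it needs no external convex-analysis results beyond the standard norm/dual-norm conjugacy---while the paper's route is shorter in context because it recycles the machinery already set up for Proposition~\ref{prop: max_affine_perspective} and defers the conjugate computation to the next proposition. You correctly identify this alternative in your final paragraph.
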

\begin{proof}
	Following the same reasoning as in the proof of Proposition~\ref{prop: max_affine_perspective}, we find that $\prsp{c_k}(x,t) = t(\|x / t - \overline{x}\| - \epsilon) = \|x-t \overline{x}\| - \epsilon t$ for $t>0$ and $\prsp{c_k}(x,t) = \liminf_{(x',t')\to(x,0)}(\|x'-t' \overline{x}\|-\epsilon t') = \|x\| = \|x-t \overline{x}\| - \epsilon t$ for $t=0$.
\end{proof}

\begin{proposition}
	\label{prop: norm_conjugate}
	The conjugate of $c_k\colon x\mapsto \|x-\overline{x}\|-\epsilon$ is given for all $z\in\Rd$ by
	\begin{equation*}
		c_k^*(z) = \begin{aligned}
		\begin{cases}
			z^\top \overline{x} + \epsilon & \text{if $\|z\|_* \le 1$}, \\
			\infty & \text{if $\|z\|_* > 1$}.
		\end{cases}
		\end{aligned}
	\end{equation*}
\end{proposition}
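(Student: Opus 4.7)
The plan is to apply the definition of the conjugate directly and reduce the computation to the well-known conjugate of a norm. First I would write
\begin{equation*}
	c_k^*(z) = \sup_{x\in\Rd} \bigl(z^\top x - \|x-\overline{x}\| + \epsilon\bigr),
\end{equation*}
noting that $\dom(c_k) = \Rd$ since $c_k$ is finite everywhere. Performing the change of variables $u = x - \overline{x}$ yields
\begin{equation*}
	c_k^*(z) = z^\top \overline{x} + \epsilon + \sup_{u\in\Rd}\bigl(z^\top u - \|u\|\bigr),
\end{equation*}
so the problem reduces to evaluating the conjugate of the norm $\|\cdot\|$ at $z$.

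Next I would split into the two cases based on $\|z\|_*$. If $\|z\|_* \le 1$, then by the definition of the dual norm we have $z^\top u \le \|z\|_* \|u\| \le \|u\|$ for every $u\in\Rd$, so $z^\top u - \|u\| \le 0$, with equality attained at $u = 0$; hence the supremum equals $0$ and $c_k^*(z) = z^\top \overline{x} + \epsilon$. If instead $\|z\|_* > 1$, then by the definition of the dual norm there exists a unit-norm vector $\hat{u}\in\Rd$ with $z^\top \hat{u} = \|z\|_* > 1$. Taking $u = t\hat{u}$ with $t \to \infty$ gives
\begin{equation*}
	z^\top u - \|u\| = t\bigl(\|z\|_* - 1\bigr) \to \infty,
\end{equation*}
so the supremum is $+\infty$ and therefore $c_k^*(z) = \infty$.

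The main obstacle, if there is one at all, is just handling the dual norm characterization cleanly in the case $\|z\|_* > 1$; one must invoke the attainment of $\|z\|_* = \sup_{\|u\|\le 1} z^\top u$ by some unit vector $\hat{u}$ (which holds in finite dimensions by compactness) to produce an explicit unbounded direction. The rest is routine algebra following directly from Definition~\ref{def: conjugate}.
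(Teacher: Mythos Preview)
Your proof is correct and follows essentially the same approach as the paper's own proof: both reduce the conjugate computation to $\sup_{u}\bigl(z^\top u - \|u\|\bigr)$ via the shift $u = x-\overline{x}$ and then split into the two dual-norm cases. The only cosmetic difference is that the paper handles $\|z\|_*>1$ by picking any $x'$ with $z^\top x'/\|x'\|>1$ (not requiring attainment of the dual-norm supremum), whereas you invoke compactness to get an exact maximizer $\hat{u}$; both arguments are valid in $\Rd$.
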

\begin{proof}
	Let $z\in\Rd$ be such that $\|z\|_* \le 1$. Then
	\begin{equation*}
	\sup_{x \in\Rd : x \ne \overline{x}} \frac{z^\top (x-\overline{x})}{\|x-\overline{x}\|} = \sup_{x'\in\Rd : \|x'\| \le 1} z^\top x' = \|z\|_* \le 1,
	\end{equation*}
	so $z^\top(x-\overline{x})-\|x-\overline{x}\| \le 0$ for all $x\ne \overline{x}$. Also, $z^\top(x-\overline{x})-\|x-\overline{x}\| = 0$ for $x=\overline{x}$, and therefore $\sup_{x\in\Rd} (z^\top (x-\overline{x}) - \|x-\overline{x}\|) = 0$, indicating that
	\begin{equation*}
		c_k^*(z) = \sup_{x\in\Rd} (z^\top(x-\overline{x})-\|x-\overline{x}\|) + z^\top \overline{x} + \epsilon = z^\top \overline{x}+\epsilon.
	\end{equation*}

	On the other hand, let $z\in\Rd$ be such that $\|z\|_*>1$. Then there exists $x'\in\Rd\setminus\{0\}$ such that $\frac{z^\top x'}{\|x'\|} >1$, implying that $z^\top x' - \|x'\| > 0$, and hence
	\begin{align*}
		c_k^*(z) &\ge z^\top (\overline{x}+\alpha x') - \|\alpha x'\| + \epsilon \\
			 &= \alpha(z^\top x' - \|x'\|) + z^\top \overline{x}+\epsilon \to\infty
	\end{align*}
	as $\alpha\to\infty$. Thus, $c_k^*(z) = \infty$.
\end{proof}

\begin{proposition}
	\label{prop: norm_conjugate_perspective}
	The perspective of the conjugate of $c_k\colon x\mapsto \|x-\overline{x}\| - \epsilon$ is given for all $(z,t)\in\Rd\times\Rpl$ by
	\begin{equation*}
		\prsp{c_k^*}(z,t) = \begin{aligned}
		\begin{cases}
			z^\top \overline{x} + \epsilon t & \text{if $\|z\|_* \le t$}, \\
			\infty & \text{if $\|z\|_* > t$}.
		\end{cases}
		\end{aligned}
	\end{equation*}
\end{proposition}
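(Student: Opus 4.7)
The plan is to apply Definition~\ref{def: perspective} directly to the conjugate $c_k^*$ computed in Proposition~\ref{prop: norm_conjugate}, handling the cases $t>0$ and $t=0$ separately. First observe that $c_k \colon x\mapsto \|x-\overline{x}\|-\epsilon$ is finite, continuous, and convex on $\Rd$ and hence proper, closed, and convex; the Fenchel--Moreau theorem therefore guarantees that $c_k^*$ is likewise proper, closed, and convex, so its perspective is well-defined.

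For $t>0$, I would substitute directly into the definition to obtain $\prsp{c_k^*}(z,t) = t c_k^*(z/t)$. By Proposition~\ref{prop: norm_conjugate}, this equals $t\bigl((z/t)^\top \overline{x} + \epsilon\bigr) = z^\top\overline{x} + \epsilon t$ when $\|z/t\|_* \le 1$, and $+\infty$ otherwise. Using the positive homogeneity of the dual norm, $\|z/t\|_* \le 1$ is equivalent to $\|z\|_* \le t$, which yields exactly the stated formula on $\Rd\times(0,\infty)$.

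For $t=0$, I would invoke the second branch of Definition~\ref{def: perspective}, namely $\prsp{c_k^*}(z,0) = \sup_{y\in\dom((c_k^*)^*)} y^\top z$. Because $c_k$ is proper, closed, and convex with $\dom(c_k) = \Rd$, the Fenchel--Moreau theorem gives $(c_k^*)^* = c_k^{**} = c_k$, so $\dom((c_k^*)^*) = \Rd$. Consequently $\sup_{y\in\Rd} y^\top z$ equals $0$ if $z=0$ and $+\infty$ otherwise, which matches the stated formula at $t=0$ under the identification that $\|z\|_* \le 0$ is equivalent to $z=0$. Alternatively, one could mirror the $\liminf$ arguments used in the proofs of Propositions~\ref{prop: max_affine_perspective} and \ref{prop: norm_perspective}: for $z\ne 0$ and $t'\downarrow 0$, eventually $\|z/t'\|_*>1$ forces $c_k^*(z/t') = \infty$, while for $z=0$ the product $t' c_k^*(0) = \epsilon t'$ vanishes.

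The only substantive obstacle is handling the $t=0$ boundary correctly and verifying that $c_k^*$ satisfies the hypotheses needed for Definition~\ref{def: perspective} to apply; beyond that, the argument is essentially algebraic, relying on the homogeneity identity $\|z/t\|_* = \|z\|_*/t$ together with the explicit form of $c_k^*$ already established in Proposition~\ref{prop: norm_conjugate}.
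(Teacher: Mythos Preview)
Your proposal is correct and follows essentially the same approach as the paper: both handle $t>0$ by direct substitution into $t\,c_k^*(z/t)$ using the explicit form of $c_k^*$ and the homogeneity $\|z/t\|_* = \|z\|_*/t$, and both handle $t=0$ by invoking $c_k^{**}=c_k$ (via Fenchel--Moreau) so that $\dom(c_k^{**})=\Rd$ and the supremum $\sup_{y\in\Rd} y^\top z$ collapses to $0$ or $+\infty$ according to whether $z=0$. Your additional remark that $c_k^*$ is itself proper, closed, and convex (so that Definition~\ref{def: perspective} applies) is a small extra justification the paper leaves implicit, and your alternative $\liminf$ sketch is not needed but is also valid.
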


\begin{proof}
	Let $t>0$. If $z\in\Rd$ is such that $\|z\|_*\le t$, then $\|z / t\|_* \le 1$, so $\prsp{c_k^*}(z,t) = t c_k^*(z / t) = t((z / t)^\top \overline{x}+\epsilon) = z^\top \overline{x}+ \epsilon t$. If $\|z\|_* > t$, then $\|z / t\|_* > 1$, so $\prsp{c_k^*}(z,t) = t c_k^*(z / t) = \infty$.

	On the other hand, let $t = 0$. Then
	\begin{equation*}
		\prsp{c_k^*}(z,t) = \sup_{x\in \dom(c_k^{**})} z^\top x = \sup_{x\in\Rd} z^\top x = \begin{aligned}
		\begin{cases}
			0 & \text{if $z=0$}, \\
			\infty & \text{if $z\ne 0$},
		\end{cases}
		\end{aligned}
	\end{equation*}
	since $c_k^{**} = c_k$ which has domain $\Rd$, as $c_k$ is proper, closed, and convex \cite[Theorem~12.2]{rockafellar1970convex}. Since, when $t=0$, the condition $z=0$ is equivalent to $\|z\|_*\le t$ and the condition $z\ne 0$ is equivalent to $\|z\|_* > t$, the proof is complete.
\end{proof}

We also provide the conjugates and perspectives for polyhedral $X$:

\begin{proposition}
	\label{prop: polyhedral_X}
	Let $c_k\colon x \mapsto \psi_k^\top x + \omega_k$ for some $\psi_k\in\Rd$ and some $\omega_k\in\R$. Then the following all hold:
	\begin{enumerate}
		\item $\prsp{c_k}(x,t) = \psi_k^\top x + \omega_k t$,
		\item $c_k^*(z) = \begin{aligned}
		\begin{cases}
			-\omega_k & \text{if $z = \psi_k$}, \\
			\infty & \text{if $z\ne \psi_k$},
		\end{cases}
		\end{aligned}$
		\item and $\prsp{c_k^*}(z,t) = \begin{aligned}
		\begin{cases}
			-\omega_k t & \text{if $z = t \psi_k$}, \\
			\infty & \text{if $z\ne t \psi_k$}.
		\end{cases}
		\end{aligned}$
	\end{enumerate}
\end{proposition}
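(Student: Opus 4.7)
The plan is to prove each of the three claims by direct application of the definitions of perspective and conjugate, handling the boundary case $t=0$ in the same manner used in Propositions~\ref{prop: norm_perspective} and \ref{prop: norm_conjugate_perspective}. Since $c_k$ is affine (hence proper, closed, and convex), all the perspective and biconjugate machinery in Definitions~\ref{def: conjugate} and \ref{def: perspective} applies without issue.

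First I would compute $\prsp{c_k}(x,t)$. For $t>0$, Definition~\ref{def: perspective} gives $\prsp{c_k}(x,t) = t c_k(x/t) = t(\psi_k^\top (x/t) + \omega_k) = \psi_k^\top x + \omega_k t$. For $t=0$, I would appeal to the second branch of Definition~\ref{def: perspective}: $\prsp{c_k}(x,0) = \sup_{y\in\dom(c_k^*)} y^\top x$. After establishing (in the next step) that $\dom(c_k^*) = \{\psi_k\}$, this supremum is $\psi_k^\top x$, which coincides with the claimed formula evaluated at $t=0$.

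Next I would compute $c_k^*(z) = \sup_{x\in\Rd}(z^\top x - \psi_k^\top x - \omega_k) = \sup_{x\in\Rd}((z-\psi_k)^\top x) - \omega_k$. When $z=\psi_k$, the inner linear function is identically $0$ and the supremum equals $-\omega_k$; when $z\ne \psi_k$, the supremum is $+\infty$. This yields the second claim and confirms that $\dom(c_k^*)=\{\psi_k\}$, closing the loop with step~1.

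Finally I would handle $\prsp{c_k^*}(z,t)$. For $t>0$, Definition~\ref{def: perspective} gives $\prsp{c_k^*}(z,t) = t c_k^*(z/t)$; applying the formula for $c_k^*$ from step~2, this evaluates to $-\omega_k t$ when $z/t = \psi_k$ (i.e., $z=t\psi_k$) and to $\infty$ otherwise. For $t=0$, mirroring the proof of Proposition~\ref{prop: norm_conjugate_perspective}, I would write
\begin{equation*}
	\prsp{c_k^*}(z,0) = \sup_{x\in\dom(c_k^{**})} z^\top x = \sup_{x\in\Rd} z^\top x,
\end{equation*}
since $c_k^{**}=c_k$ (by \cite[Theorem~12.2]{rockafellar1970convex}) and $\dom(c_k)=\Rd$. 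This supremum is $0$ if $z=0$ and $\infty$ otherwise, which matches the claimed formula at $t=0$ (where the condition $z=t\psi_k=0$ is equivalent to $z=0$, and the value $-\omega_k t$ becomes $0$).

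The statement contains essentially no technical obstacle — the calculations are routine evaluations — so the only point of care is the $t=0$ edge case of the perspective, which I would dispatch using the same two-pronged technique (limit/supremum characterization plus $c_k^{**}=c_k$) that the paper has already employed. The slight subtlety is the cross-reference between step~1 and step~2: the $t=0$ branch of $\prsp{c_k}$ requires knowing $\dom(c_k^*)$, so I would either compute $c_k^*$ first and then $\prsp{c_k}$, or simply note that both are established self-consistently in the same proof.
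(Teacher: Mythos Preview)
Your proposal is correct and matches the paper's approach exactly: the paper omits the proof, stating only that it ``follows from a straightforward application of the definitions of conjugate and perspective,'' which is precisely what you carry out. Your handling of the $t=0$ boundary cases via $\dom(c_k^*)=\{\psi_k\}$ and $c_k^{**}=c_k$ is sound and consistent with the analogous computations in Propositions~\ref{prop: norm_perspective} and~\ref{prop: norm_conjugate_perspective}.
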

The proof of Proposition~\ref{prop: polyhedral_X} follows from a straightforward application of the definitions of conjugate and perspective, and is hence omitted for brevity.

The conjugate $g_i^*$ is all that remains to compute. However, although computing $g_i^*$ in closed form for the univariate ($d=1$) function $g_i\colon x\mapsto \max_{j\in\J} (a_{ij} x + b_{ij})$ can be straightforward, generalizing the formula to higher-dimensional settings is nontrivial. In theory, it is possible to express $g_i^*$ for $d>1$ in closed form via \cite[Theorem~19.2]{rockafellar1970convex}. However, this requires solving a vertex enumeration problem, i.e., determining finite sets $V,R\subseteq \Rd\times \R$ such that the polyhedron $\epi(g_i^*) \coloneqq \{(x,t)\in\Rd\times \R : \text{$a_{ij}^\top x + b_{ij} \le t$ for all $j\in\J$}\}$ equals $\conv(P)+\cone(R)$. The vertex enumeration problem is NP-hard in general \cite{khachiyan2009generating}. See the Minkowski-Weyl theorem \cite[Theorem~19.1]{rockafellar1970convex} for the theory on such representations of polyhedra. In Theorem~\ref{thm: max_affine_conjugate} that follows, we instead take a duality-based robust optimization approach to tractably deal with the conjugate $g_i^*$ in a direct manner.

\begin{lemma}
	\label{lem: domain_max_affine_conjugate}
	It holds that $\dom(g_i^*) = \conv\{a_{ij} : j\in\J\}$.
\end{lemma}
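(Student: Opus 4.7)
The plan is to prove the two set inclusions separately; both should follow from elementary manipulations of the conjugate's definition combined with the max-affine structure of $g_i$.

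For the inclusion $\conv\{a_{ij} : j\in\J\} \subseteq \dom(g_i^*)$, I would take an arbitrary $y = \sum_{j\in\J} \lambda_j a_{ij}$ with $\lambda_j \ge 0$ and $\sum_{j\in\J}\lambda_j = 1$. The key observation is that $a_{ij}^\top x + b_{ij} \le g_i(x)$ for every $j\in\J$ and every $x\in\Rd$; multiplying by $\lambda_j$ and summing over $j$ yields $y^\top x \le g_i(x) - \sum_{j\in\J}\lambda_j b_{ij}$. Rearranging and taking the supremum over $x\in\Rd$ produces the finite bound $g_i^*(y) \le -\sum_{j\in\J}\lambda_j b_{ij}$, placing $y$ in $\dom(g_i^*)$.

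For the reverse inclusion, I would argue the contrapositive: any $y\notin \conv\{a_{ij} : j\in\J\}$ should satisfy $g_i^*(y) = \infty$. Since $\conv\{a_{ij} : j\in\J\}$ is the convex hull of finitely many points and hence compact, strict separation of the disjoint compact sets $\{y\}$ and $\conv\{a_{ij} : j\in\J\}$ produces some $v\in\Rd$ with $\gamma \coloneqq y^\top v - \max_{j\in\J} a_{ij}^\top v > 0$. Evaluating the conjugate along the ray $x = tv$ for $t>0$ and using the bound $g_i(tv) \le t\max_{j\in\J} a_{ij}^\top v + \max_{j\in\J} b_{ij}$ would give $y^\top(tv) - g_i(tv) \ge t\gamma - \max_{j\in\J} b_{ij} \to \infty$ as $t\to\infty$, forcing $g_i^*(y) = \infty$.

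The proof is largely mechanical, and I do not expect significant obstacles. The only mildly delicate step is setting up the separating hyperplane with a \emph{strict} inequality, for which I rely on the fact that the convex hull of finitely many points is compact so that strict separation from the singleton $\{y\}$ is available. The remainder is bookkeeping with convex combinations and the definition of the conjugate.
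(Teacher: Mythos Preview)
Your proof is correct. For the inclusion $\conv\{a_{ij}:j\in\J\}\subseteq\dom(g_i^*)$ you do exactly what the paper does: average the pointwise inequalities $a_{ij}^\top x+b_{ij}\le g_i(x)$ with the convex weights to bound $g_i^*(y)$ by $-\sum_{j\in\J}\lambda_j b_{ij}$.

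For the reverse inclusion, however, you take a genuinely different route. The paper argues directly: given $y\in\dom(g_i^*)$, it rewrites $g_i^*(y)$ as a feasible linear program with finite value, invokes attainment of linear programs to obtain a maximizer $x$, and then applies the first-order optimality condition $0\in\partial h_i(x)$ together with the subdifferential formula for a pointwise maximum of affine functions to conclude $y\in\conv\{a_{ij}:j\in\mathcal{A}(x)\}\subseteq\conv\{a_{ij}:j\in\J\}$. Your contrapositive argument is more elementary: it avoids LP attainment results and subdifferential calculus entirely, relying only on strict separation of the compact sets $\{y\}$ and $\conv\{a_{ij}:j\in\J\}$ and an explicit ray evaluation. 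The paper's approach, by contrast, extracts additional structural information---namely, that $y$ lies in the convex hull of the \emph{active} slopes at an optimal $x$---which foreshadows the duality manipulations used later in Theorem~\ref{thm: max_affine_conjugate}. Your separation argument is cleaner for the lemma as stated but does not produce this refinement.
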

\begin{proof}
	Let $y\in\conv\{a_{ij} : j\in\J\}$. Then $y = \sum_{j\in\J}\theta_j a_{ij}$ for some $\theta\in\Rn$ such that $\theta \ge 0$ and $\sum_{j\in\J}\theta_j = 1$. Hence, for all $x\in\Rd$, we find that
	\begin{align*}
		y^\top x - \max_{j\in\J}(a_{ij}^\top x + b_{ij}) &= \sum_{j\in\J} \theta_j a_{ij}^\top x  - \max_{j\in\J}(a_{ij}^\top x + b_{ij}) \\
		&= \sum_{j\in\J}\theta_j(a_{ij}^\top x + b_{ij}) \\
		&\quad - \max_{j\in\J}(a_{ij}^\top x + b_{ij})  - \sum_{j\in\J}\theta_j b_{ij} \\
		&\le \sum_{j\in\J} \theta_j \max_{l\in\J} (a_{il}^\top x + b_{il}) \\
		&\quad - \max_{j\in\J}(a_{ij}^\top x + b_{ij}) - \sum_{j\in\J}\theta_j b_{ij} \\
		&= - \sum_{j\in\J}\theta_j b_{ij},
	\end{align*}
	and thus $g_i^*(y) \le -\sum_{j\in\J}\theta_j b_{ij} < \infty$, so $y\in\dom(g_i^*)$.

	On the other hand, let $y\in\dom(g_i^*)$, so that $g_i^*(y)<\infty$. An epigraphic reformulation of $g_i^*(y)$ yields that $\infty > g_i^*(y) = \sup_{x\in\Rd}(y^\top x - \max_{j\in\J}(a_{ij}^\top x + b_{ij})) = \sup_{(x,t)\in\Rd\times\R}\{y^\top x - t : \text{$a_{ij}^\top x + b_{ij}\le t$ for all $j\in\J$}\}$. This reformulation is a linear program with a finite optimal value, and hence by \cite[Proposition~3.1.3]{bertsekas2016nonlinear}, the reformulation is attained by some $(x,t)\in\Rd\times\R$, and since it must be the case that $t = a_{ij}^\top x + b_{ij}$ for some $j\in\J$ at this point $(x,t)$, we conclude that this $x$ solves the supremum defining $g_i^*(y)$ in its original form (i.e., pre-epigraphic reformulation). Therefore, by the first-order optimality condition for unconstrained convex optimization \cite[Theorem~23.2]{rockafellar1970convex}, it holds that $0\in\partial h_i(x)$, where $h_i\colon\Rd\to\R$ is the convex function defined by $h_i(x) = \max_{j\in\J}(a_{ij}^\top x + b_{ij}) - y^\top x$. Using the rules for subdifferentials of pointwise maxima and sums of proper convex functions \cite[Proposition~B.22]{bertsekas2016nonlinear},\cite[Theorem~23.8]{rockafellar1970convex}, we have that $\partial h_i(x) = \conv \left(\bigcup_{j\in \mathcal{A}(x)} \{a_{ij}\}\right) + \{-y\}$, where $\mathcal{A}(x)$ denotes the set of active indices at $x$: $\mathcal{A}(x) = \{j\in\J : a_{ij}^\top x + b_{ij} = \max_{l\in\J} (a_{il}^\top x + b_{il})\}$. Since $0\in\partial h_i(x)$, this yields that $y \in \conv \left(\bigcup_{j\in\mathcal{A}(x)}\{a_{ij}\}\right) \subseteq \conv\{a_{ij} : j\in\J\}$. This completes the proof.
\end{proof}

\begin{assumption}
	\label{ass: max_affine_nonredundant}
	The functions $g_i$ are nonredundant in the sense that for all $j\in\J$ there exists $x\in\Rd$ such that $g_i(x) = a_{ij}^\top x + b_{ij}$.
\end{assumption}

It is easy to see that nonredundancy of $g_i$ is efficiently verified by solving the linear (feasibility) programs $\inf\{0 : (a_{il}-a_{ij})^\top x + (b_{il} - b_{ij}) \le 0 ~ \text{for all $l\in\J$}, ~ x\in \Rd\}$ for all $j\in\J$. Removing the affine components of $g_i$ with infeasible programs ensures that Assumption~\ref{ass: max_affine_nonredundant} holds and does not change the model's predictions.

\begin{theorem}
	\label{thm: max_affine_conjugate}
	Suppose that Assumption~\ref{ass: max_affine_nonredundant} holds, and let $h\colon \Gamma \to \R$ be an arbitrary real-valued function defined on some nonempty set $\Gamma$. Then, for all $y\in\Rd$ and all $\gamma\in\Gamma$, it holds that $g_i^*(y) \le h(\gamma)$ if and only if, for all $j\in\J$, there exists $\nu_{ij}\in\Rn$ such that the following all hold:
	\begin{enumerate}
		\item $y = \sum_{j\in\J} \theta_j a_{ij}$ for some $\theta\in\Rn$ such that $\theta \ge 0$ and $\sum_{j\in\J}\theta_j = 1$,
		\item $\nu_{ij} \ge 0$,
		\item $y - a_{ij} + \sum_{l\in\J}(\nu_{ij})_l (a_{ij} - a_{il}) = 0$,
		\item and $-b_{ij}+\sum_{l\in\J} (\nu_{ij})_l (b_{ij} - b_{il}) \le h(\gamma)$.
	\end{enumerate}
\end{theorem}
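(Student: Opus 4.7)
The plan is to decompose $g_i^*$ piecewise according to which affine component of $g_i$ attains the maximum, and then to apply linear programming duality on each piece.

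I would first introduce the closed polyhedra $R_j = \{x \in \Rd : (a_{il} - a_{ij})^\top x \le b_{ij} - b_{il} ~\forall l \in \J\}$ on which $g_i(x) = a_{ij}^\top x + b_{ij}$, along with the auxiliary value $\phi_{ij}(y) \coloneqq \sup\{(y-a_{ij})^\top x - b_{ij} : x \in R_j\}$. Since $\Rd = \bigcup_{j\in\J} R_j$ and $\J$ is finite, splitting the supremum gives
\begin{equation*}
	g_i^*(y) = \sup_{x\in\Rd}(y^\top x - g_i(x)) = \max_{j\in\J}\phi_{ij}(y),
\end{equation*}
so the target inequality $g_i^*(y) \le h(\gamma)$ is equivalent to $\phi_{ij}(y) \le h(\gamma)$ holding for every $j \in \J$.

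Next I would dualize each $\phi_{ij}(y)$. Attaching a multiplier $(\nu_{ij})_l \ge 0$ to the $l$-th inequality defining $R_j$, the LP dual is
\begin{equation*}
	\min_{\nu_{ij}\in\Rn}\Bigl\{-b_{ij} + \sum_{l\in\J}(\nu_{ij})_l(b_{ij}-b_{il}) : \sum_{l\in\J}(\nu_{ij})_l(a_{il}-a_{ij}) = y-a_{ij}, ~ \nu_{ij} \ge 0\Bigr\},
\end{equation*}
whose equality constraint, nonnegativity, and ``objective $\le h(\gamma)$'' bound are precisely conditions 3, 2, and 4 of the theorem (after rewriting the equality as $y - a_{ij} + \sum_l(\nu_{ij})_l(a_{ij}-a_{il}) = 0$). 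The reverse implication of the theorem then follows from weak LP duality: any feasible $\nu_{ij}$ bounds $\phi_{ij}(y)$ above by its dual objective $\le h(\gamma)$, so $g_i^*(y) = \max_j \phi_{ij}(y) \le h(\gamma)$. For the forward implication, assume $g_i^*(y) \le h(\gamma) < \infty$; then each $\phi_{ij}(y) < \infty$, Assumption~\ref{ass: max_affine_nonredundant} ensures $R_j \ne \emptyset$ so that the primal LP is feasible, and strong LP duality (\cite[Proposition~3.1.3]{bertsekas2016nonlinear}) delivers an optimal dual $\nu_{ij}$ with objective $\phi_{ij}(y) \le h(\gamma)$, producing conditions 2--4.

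Condition 1 says $y \in \conv\{a_{ij} : j \in \J\} = \dom(g_i^*)$ by Lemma~\ref{lem: domain_max_affine_conjugate}. Forward, $g_i^*(y) \le h(\gamma) < \infty$ immediately forces $y \in \dom(g_i^*)$; backward, the existence of $\nu_{ij}$ satisfying conditions 2--3 for every $j$ makes each $\phi_{ij}(y)$ finite and thus $g_i^*(y) < \infty$, so $y \in \dom(g_i^*)$ again. The main obstacle is the LP bookkeeping---arranging the primal max-LP and its dual min-LP so that signs and indexing reproduce conditions 3 and 4 exactly, and recognizing that the nonredundancy hypothesis is precisely what provides primal feasibility so that strong LP duality applies uniformly across pieces.
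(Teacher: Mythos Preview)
Your proposal is correct and follows essentially the same route as the paper: decompose $g_i^*(y)=\max_{j\in\J}\phi_{ij}(y)$ by restricting to the polyhedral regions $R_j$ where the $j$th affine piece is active, then apply LP duality on each piece, using Assumption~\ref{ass: max_affine_nonredundant} to guarantee primal feasibility and Lemma~\ref{lem: domain_max_affine_conjugate} to handle condition~1. The only cosmetic differences are that the paper first disposes of the case $y\notin\conv\{a_{ij}\}$ and then invokes strong duality (citing \cite[Proposition~4.4.2]{bertsekas2016nonlinear} rather than 3.1.3, which only gives attainment) for both directions simultaneously, whereas you cleanly separate the reverse implication via weak duality; your observation that conditions~2--3 already force $y\in\dom(g_i^*)$ is a nice addendum the paper does not make explicit.
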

\begin{proof}
	Let $y\in\Rd$ and $\gamma\in\Gamma$. If $y \ne \sum_{j\in\J}\theta_j a_{ij}$ for all $\theta\in\Rn$ such that $\theta \ge 0$ and $\sum_{j\in\J}\theta_j = 1$, then $y \notin \conv\{a_{ij} : j\in\J\}$ and hence $y \notin \dom(g_i^*)$ by Lemma~\ref{lem: domain_max_affine_conjugate}. In this case, $g_i^*(y) = \infty > h(\gamma)$ since $h$ is real-valued. Therefore, the first condition enumerated in the theorem is necessary for $g_i^*(y) \le h(\gamma)$.

	Going forward, assume that $y = \sum_{j\in\J}\theta_j a_{ij}$ for some $\theta\in\Rn$ such that $\theta \ge 0$ and $\sum_{j\in\J}\theta_j = 1$. Hence, $g_i^*(y)<\infty$. Breaking up the conjugate's supremum into $n$ suprema over the affine components of $g_i$ yields
	\begin{align*}
		g_i^*(y) &= \sup_{x\in\Rd} (y^\top x - \max_{j\in\J}(a_{ij}^\top x + b_{ij})) \\
			 &= \max_{j\in\J} \sup_{x\in\Rd}\{(y-a_{ij})^\top x - b_{ij} : \\
			 &\quad \text{$(a_{il}-a_{ij})^\top x + (b_{il}-b_{ij}) \le 0$ for all $l\in\J$}\}.
	\end{align*}
	Denote the inner suprema by $\mathfrak{p}_{ij} \coloneqq \sup_{x\in\Rd}\{(y-a_{ij})^\top x - b_{ij} : \text{$(a_{il}-a_{ij})^\top x + (b_{il}-b_{ij}) \le 0$ for all $l\in\J$}\}$. Since, by Assumption~\ref{ass: max_affine_nonredundant}, for all $j\in\J$ there exists $x\in\Rd$ such that $\max_{l\in\J}(a_{il}^\top x + b_{il}) = g_i(x) = a_{ij}^\top x + b_{ij}$, it holds that $\{x\in\Rd : \text{$a_{ij}^\top x + b_{ij} \ge a_{il}^\top x + b_{il}$ for all $l\in\J$}\} \ne \emptyset$ for all $j\in\J$, implying that every $\mathfrak{p}_{ij}$ is feasible, i.e., $\mathfrak{p}_{ij}>-\infty$. Furthermore, since $g_i^*(y) < \infty$, it must be the case that $\mathfrak{p}_{ij}<\infty$ for all $j\in\J$. Thus, every optimal value $\mathfrak{p}_{ij}$ is finite. Therefore, by \cite[Proposition~3.1.3]{bertsekas2016nonlinear}, every $\mathfrak{p}_{ij}$ is attained, and therefore by \cite[Proposition~4.4.2]{bertsekas2016nonlinear} strong duality holds between $\mathfrak{p}_{ij}$ and its dual problem, which we denote by $\mathfrak{d}_{ij}$, and it also holds that $\mathfrak{d}_{ij}$ is attained. A routine derivation via Lagrangian duality therefore yields that
	\begin{align*}
		\mathfrak{p}_{ij} &= \mathfrak{d}_{ij} \\
		&= \inf_{\nu_{ij}\in\Rn}\bigg\{\sum_{l\in\J}(\nu_{ij})_l (b_{ij} - b_{il}) - b_{ij} : \\
				  & \quad y-a_{ij}+\sum_{l\in\J} (\nu_{ij})_l (a_{ij} - a_{il}) = 0, ~ \nu_{ij} \ge 0\bigg\}.
	\end{align*}
	Hence, $g_i^*(y) \le h(\gamma)$ if and only if $\max_{j\in\J} \mathfrak{p}_{ij} \le h(\gamma)$ if and only if $\mathfrak{p}_{ij} \le h(\gamma)$ for all $j\in\J$. Thus, since $\mathfrak{d}_{ij}$ is attained, it holds that $g_i^*(y) \le h(\gamma)$ if and only if, for all $j\in\J$, there exists $\nu_{ij}\in\Rn$ such that $\nu_{ij} \ge 0$, $y-a_{ij}+\sum_{l\in\J} (\nu_{ij})_l (a_{ij} - a_{il}) = 0$, and $-b_{ij}+\sum_{l\in\J} (\nu_{ij})_l (b_{ij} - b_{il}) \le h(\gamma)$. This completes the proof.
\end{proof}

With the above conjugate and perspective derivations, our reformulations of $\underline{c},\overline{c}$ are complete; they may now be directly solved using off-the-shelf convex optimization solvers.

\begin{remark}
	Our developments can be generalized, so long as one can compute the appropriate conjugates and perspectives. In particular, the mathematical machinery yielding a discrete distribution solution to $p'$ from a solution to an associated finite-dimensional convex optimization problem may be applied to general convex functions $g_i$ and other (non-norm-based and non-polyhedral) convex attack sets $X$ \cite{zhen2021mathematical}. In fact, moment constraints on $\mu\in\P(X)$ may even be added to the semi-infinite program $p'$, which may allow for modeling alternative ``distributional attacks'' beyond the standard ``Dirac attack'' at a single point considered here.
\end{remark}

\section{Experiments}
\label{sec: exper}

In this section, we illustrate the utility of our method in both robust control and image classification settings.\footnote{All experiments are conducted on a Ubuntu 22.04 instance with an Intel i7-9700K CPU and NVIDIA RTX A6000 GPU.}

\subsection{Robust Control Certification}
\label{sec: controller}

We take an illustrative robust control example adapted from the well-known autonomous vehicle collision avoidance problem \cite{ren2022chance,wang2020non}. Consider two planar vehicles approaching an intersection located at the origin $(0,0)\in\R^2$. One vehicle travels east with state $\bx(t) = \left(x(t), \dot{x}(t)\right)\in\R^2$ at time $t$. The other vehicle, which we control and hence term the ``ego vehicle,'' travels north with state $\by(t) = \left(y(t), \dot{y}(t)\right)$. The eastbound uncontrolled vehicle has a fixed velocity ($\ddot{x}(t) = 0$ for all $t$). The full state $\left(x(t), \dot{x}(t), y(t), \dot{y}(t)\right)$ is randomly initialized at $t=0$ within $[-3, -2] \times [1 / 2, 5 / 2] \times [-3, -2] \times [0, 2]$. The vehicles are each $1$ unit long and $1/2$ unit wide, matching the width of the road. Thus, a vehicle is considered to be in the intersection if the absolute value of its position is less than $3/4$. If the vehicles collide, the simulation is stopped. We simulate standard double integrator dynamics with a time step $\Delta t = 0.05$ for $100$ steps.

We control the northbound vehicle using a learned policy $u(t) = -\poll\left(\bx(t), \by(t)\right)$ that enters the dynamics as $\ddot{y}(t) = \Pi_{[-1,1]} \left(u(t)\right)$, where $\poll \colon \R^4\to\R$ is a min-max affine function with $m=n=10$ and $\Pi_{[-1,1]}$ is the natural projection mapping of $\R$ onto $[-1,1]$. Our robustness certificates apply for all training schemes, e.g., reinforcement learning and imitation learning. We train $\poll$ using imitation learning on $500$ trajectories generated by a hand-programmed expert policy $\pole$. We use the mean squared error loss function and train for $20$ epochs using the Adam optimizer at a learning rate of $0.01$. The expert policy $\pole$ is designed to stop the ego vehicle $\delta=0.1$ units before the intersection with a constant acceleration, then apply no acceleration until the tail of the uncontrolled vehicle is $\delta$ units past the intersection, and then accelerate with the maximum input of $1$.

We now consider certifying the safety of our control system. Our goal is to guarantee that the ego vehicle always brakes when the uncontrolled vehicle is approaching or inside the intersection. This enforcement of braking corresponds to ensuring that the largest acceleration signal $u(t)$ is less than zero, which amounts to minimizing the output of $\poll$ over the set of states for which we desire braking. This is formalized by requiring braking for all states in the set
\begin{equation*}
	X = [-3+\delta,\tfrac{3}{4}]\times [\tfrac{1}{2}+\delta,\tfrac{5}{2}-\delta] \times [-3+\delta, -\tfrac{3}{4}] \times [\delta,2-\delta],
\end{equation*}
which consists of states where the uncontrolled vehicle is approaching or in the intersection and the ego vehicle is approaching the intersection. The small positive constant $\delta = 0.1$ accounts for boundary states where expert trajectories may not have been sampled.

Utilizing our robustness certificates from Section~\ref{sec: theory}, we verify that indeed $u(t) = -\poll\left(\bx(t),\by(t)\right) < 0$ for all states $(\bx(t),\by(t))\in X$. For visual purposes, we also consider fixing a particular $\by(t)$ and computing the largest possible acceleration $u(t)$ amongst all uncontrolled vehicle states $\bx(t)$ captured by $X$. The solutions to this problem over a range of $\by(t)$ are plotted in Figure~\ref{fig: controls}. As expected, for all $\by(t)$, the ego vehicle is braking. As the ego vehicle approaches the intersection (large $y(t)$) or becomes faster (large $\dot{y}(t)$), we certify that the controller brakes more heavily.

\begin{figure}
    \centering
    \input{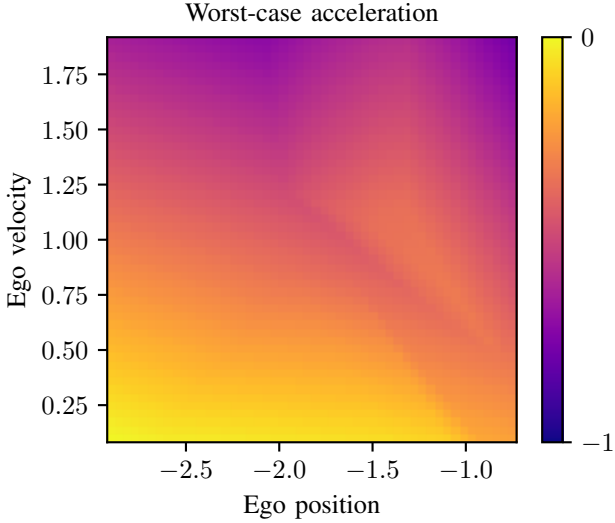}
    \vspace*{-1.5\baselineskip}
    \caption{Largest possible acceleration over all uncontrolled vehicle states for particular values of the ego vehicle state. The output is always negative, ensuring some level of braking.}
    \label{fig: controls}
\end{figure}

\subsection{Image Classification}
\label{sec: image}

We demonstrate the tightness and efficiency of our method on an image classification example adapted from \cite{pfrommer2023asymmetric}. The task is to distinguish between two visually similar MNIST classes: the digits $3$ and $8$ \cite{lecun1998mnist}. As we consider the asymmetric setting, we aim to certify predictions for one particular class, which we take to be the class of $3$'s, while maintaining high clean accuracy for both classes. We consider the attack set $X = \{x\in\Rd : \|x-\overline{x}\|_\infty \le \epsilon\}$ over a range of radii $\epsilon>0$ around test images $\overline{x}$. In this setting, certificates ensure that pixelwise adversarial alterations of an image $\overline{x}$ of a $3$ cannot fool the classifier into predicting an $8$.

We compare two approaches: 1) directly learning our min-max representation with $n=m=15$ and certifying via our convex optimization-based certificates, and 2) learning a standard composition-based ReLU model and certifying via the state-of-the-art verifier $\alpha,\beta$-CROWN \cite{wang2021beta}. Since $\alpha,\beta$-CROWN's worst-case runtime scales exponentially with model size, we instantiate the standard ReLU model with one hidden layer and $100$ hidden units, which is the smallest hidden layer size that yields comparable clean accuracy to our min-max representation. We use adversarial training (see \cite{madry2018towards}) with $\ell_\infty$-attacks starting at a radius of $0.001$ and linearly interpolate to a radius of $\epsilon_\textup{train}$ over the first $20$ epochs, where $\epsilon_\textup{train}=0.05$ for our model and $\epsilon_\textup{train}=0.3$ for the standard ReLU model. Both models are trained using the Adam optimizer with a learning rate of $0.001$ for $60$ epochs.

Figure~\ref{fig: mnist} compares the certified accuracy (averaged over the test inputs) of our method against that of $\alpha,\beta$-CROWN. As certifying at a particular $\epsilon$ using our method is fast, for each test input, the largest certifiable $\ell_\infty$-radius is found using binary search in order to yield a smooth certified accuracy curve. On the other hand, due to the expensive runtime of $\alpha,\beta$-CROWN, we only certify at the select radii shown. Our min-max representation exceeds the state-of-the-art baseline certified radii at far faster runtimes: certifying a single input-radius pair $(\overline{x},\epsilon)$ takes on average $3.67$ seconds with $\alpha,\beta$-CROWN versus only $0.48$ seconds with our method. We note that our runtime comparisons are solely based off of models with equivalent clean accuracy. Due to space constraints, we leave more thorough analyses of relative expressivity and computational complexity for future work.

\begin{figure}
    \centering
    \input{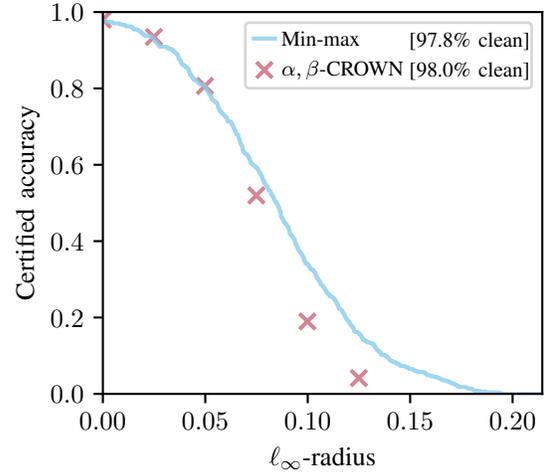}
    \caption{Certified accuracies of our min-max representation and of $\alpha,\beta$-CROWN on the MNIST $3$-versus-$8$ dataset.}
    \label{fig: mnist}
\end{figure}

\section{Conclusions}
\label{sec: conc}

In this work, we \emph{exactly} solve the nonconvex robustness certification problem over convex attack sets for min-max representations of ReLU neural networks by developing a tractable convex reformulation. An interesting line of future work may include developing more efficient min-max representations or estimations for arbitrary ReLU neural networks, so that the advantageous optimization properties derived in this paper may be easily applied. Other interest lies in comparing the number of affine regions of a general min-max affine function versus that of a general ReLU neural network.


\bibliographystyle{IEEEtran}
\bibliography{references}

\end{document}